\numberwithin{equation}{section}
\newtheorem{thm}{Theorem}[section]
\newtheorem{rem}{Remark}[section]
\newtheorem{prop}{Proposition}[section]
\newtheorem*{rem*}{Remark}
\newcommand{\ed}{\end {document}}
\begin{document}
\title[Helmholtz]{\bf Localization for general Helmholtz}

\author[X.Y. Cheng]{Xinyu Cheng}
\address{Xinyu Cheng, Research Institute of Intelligent Complex Systems, Fudan University, Shanghai, P. R. China} 
\email{xycheng@fudan.edu.cn}

\author[Dong Li]{Dong Li}
\address{Dong Li,  Department of Mathematics, the University of Hong Kong, Hong Kong, China}
\email{mathdl@hku.hk}

\author[W. Yang]{ Wen Yang}
\address{Wen Yang, Department of Mathematics, Faculty of Science and Technology, University of Macau, Taipa, Macau, China}
\email{wenyang@um.edu.mo}
	
\thanks{X.Y. Cheng is partially supported by the Shanghai “Super Postdoc” Incentive Plan (No. 2021014), the International Postdoctoral Exchange Fellowship Program (No. YJ20220071) and China Postdoctoral Science Foundation (Grant No. 2022M710796,2022T150139). 
Dong Li is supported  in part by NSFC 12271236. 
W. Yang is partially supported by National Key R\&D Program of China 2022YFA1006800, NSFC No. 12171456, NSFC No. 12271369 and SRG2023-00067-FST.}

\renewcommand{\thefootnote}{\fnsymbol{footnote}}
	
\maketitle

{\noindent\small{\bf Abstract:}
In \cite{gmw2022}, Guan, Murugan and Wei established the equivalence of the classical
Helmholtz equation with a ``fractional Helmholtz" equation in which  the Laplacian
operator is replaced by the nonlocal fractional Laplacian operator. More general equivalence
results are obtained for symbols which are complete Bernstein and satisfy additional regularity
conditions. In this work we introduce a novel and general set-up for this Helmholtz equivalence problem.
We show that under very mild and easy-to-check conditions on the Fourier multiplier, the general Helmholtz equation
can be effectively reduced to a  localization statement on the support of the symbol.
}
	
\vspace{1ex}

\section{Introduction}
A classical problem in mathematical physics is to find eigen-pairs to the linear problem $\mathcal A f = \lambda f$ where
$\mathcal A$ is a linear operator (bounded or unbounded) acting on some general function spaces. If $\mathcal A =-\Delta$, where $\Delta = \sum_i \frac {\partial^2} {\partial {x_i }^2}$ with various boundary
conditions, this becomes the standard Helmholtz equation. The Helmholtz equation
 appears ubiquitously in physical
and engineering applications such as acoustic radiation,  heat conduction, propagation of water waves etc.
In the context of heat or wave equations,  the classical Helmholtz system naturally arises as the time-independent eigen-pair form when one employs the traditional separation of variable method. Recently in \cite{wva2020} a scalar fractional
Helmholtz type equation is derived from the Maxwell's equations by incorporating nonlocal long range effects.
In this connection a prototypical version of the fractional Helmholtz equation amounts to taking $\mathcal A =(-\Delta)^s$ for some $s>0$.   A first subtle mathematical issue is to understand the limit transitions $s \to 1$ (harmonic)
or $s\to n \in \mathbb N$ (poly-harmonic) in various normed functional spaces.  Another circle of questions, recently
initiated in the work of Guan, Murugan and Wei \cite{gmw2022}, is concerned with the deep connection between
the classical Helmholtz equation and the fractional ones, namely (taking $\lambda=1$):
\begin{align}
\{ f:\, -\Delta f = f \}  \qquad \text{vs} \qquad \{ g: \; (-\Delta)^s g = g \};
\end{align}
 or more generally
 \begin{align}
\{ f:\, -\Delta f = f \}  \qquad \text{vs} \qquad \{ g: \; \Phi(-\Delta) g =  \Phi(1) g \};
\end{align}
where $\Phi(-\Delta)$ is a suitable Dirichlet-to-Neumann operator corresponding to a suitable harmonic extension
problem involving the symbol $\Phi(\xi^2)$.

 In \cite{gmw2022}, by deeply exploiting the harmonic extension technique,  Guan, Murugan and Wei established
 the
 following set of rigidity type results:
\begin{enumerate}
\item If $0<s<2$, $d=1$, $u\in L^{\infty}(\mathbb R)$ satisfies $\Lambda^s u = u$,  then\footnote{The case $d=1$ was first obtained by Fall and Weth in \cite{fw2016}.}
$u(x) =c_1
\cos x + c_2 \sin x$.
\item If $0<s~{\leq}~2$, $d\ge 2$, $u \in C^{\infty}(\mathbb R^d) \cap L^{\infty}(\mathbb R^d)$ satisfies
$\Lambda^s u = u$ and $\lim\limits_{|x| \to \infty} u (x) =0$, then $-\Delta u = u$.
\item If $m\in\mathbb{N}$, $d\ge 2$, $u \in C^{\infty}(\mathbb R^d) \cap L^{\infty}(\mathbb R^d)$ satisfies
$(-\Delta)^m u = u$ if and only if $-\Delta u = u$.
\item  Consider $d\ge 2$ and $\Phi(-\Delta) u = \Phi(1) u$ in $\mathbb R^d$ where
$\Phi$ is a complete Bernstein function. Assume $u(x) \to 0$
as $|x|\to \infty$. Consider the associated extension problem: $\tilde u \in H^1_{\mathrm{loc}}(
\mathbb R^{d+1}_+, a(t) )$ solves
\begin{align*}
\begin{cases}
\nabla_{t,x} \cdot ( a(t) \nabla_{t,x} \tilde u) =0, \qquad \text{on $\mathbb R^{d+1}_+$};\\
\lim\limits_{t\to 0} a(t) \partial_t u = -c_{n,a} \Phi(-\Delta) u, \quad \text{on $\mathbb R^d$}; \\
\lim\limits_{t\to 0} \tilde u =u;
\end{cases}
\end{align*}
where one assumes the weight $a(t)$ for $\Phi(-\Delta)$ is $A_2$ and obeys $a(t) \sim t^{\alpha}$ for
$t\gg 1$ and $|\alpha|<1$.  Under the above conditions, one has
\begin{align} \label{1.4a}
\boxed{\text{
  $u \in L^{\infty}(\mathbb R^d)$ with $u(\infty) =0$ solves $
\Phi(-\Delta ) u = \Phi(1) u$ in $\mathbb R^d$ if and only if $-\Delta u = u $ in $\mathbb R^d$}. }
\end{align}
\end{enumerate}

As was somewhat hinted earlier, the proof in \cite{gmw2022} hinges on the machinery of harmonic extension which poses
various subtle technical restrictions. For example, in dimensions $d\ge 2$, one needs to impose the decay
condition $u(x)\to 0$ as $|x|\to \infty$ for the fractional Helmholtz problem $(-\Delta)^s u = u$, $0<s<2$.
Besides, some additional asymptotic conditions need to be imposed on the weight function $a(t)$ for the Bernstein Helmholtz problem. {In recent \cite{cly2022}, we removed the decay condition and established the equivalency of classical Helmholtz equation and the
fractional Helmholtz equation corresponding to the operator $(-\Delta)^s$, $0<s<2$. However, the proof in \cite{cly2022}
hinges on the special form of the operator $(-\Delta)^s$ which has no obvious bearing on the general case.
}

In this work  we shall remove all these aforementioned technical restrictions by
 developing a novel and general set up for this Helmholtz equivalence problem.
We shall consider the general problem
\begin{align} \label{1.5a}
\Phi(-\Delta) u = \Phi(1) u \quad \text{in} \quad S^{\prime}(\mathbb R^d),
\end{align}
where $u \in L^{\infty}(\mathbb R^d)$.

We make the following technical assumptions on the function $\Phi:\, [0, \infty) \to \mathbb R$:
\begin{itemize}
\item[(a)] \underline{Smoothness and mild growth at $z=\infty$}. We assume $\Phi \in
C([0,\infty) ) \cap C^{\infty}((0,\infty))$, and all derivatives of $\Phi$ are polynomially bounded as $z\to \infty$: namely for some $z_0\ge 2$,
it holds that for all $k\ge 0$,
\begin{align}
\label{2.25aa}
|\partial^k \Phi (z) | \le C_{\Phi, k, z_0} z^{n_k}, \qquad \forall\, z\ge z_0,
\end{align}
where $C_{\Phi, k,z_0}>0$ is a constant depending only on ($\Phi$, $k$, $z_0$), and $n_k$ depends on $k$.

\item [(b)] \underline{Mild singularity at $z=0$}. 
For
some $0<\varepsilon_0\le \frac 12$,
\begin{align} \label{2.25a}
\sum_{j=0}^{d+1} \int_0^{\varepsilon_0} |z|^j | (\partial_z^j \Phi)(z)| \cdot \frac{ dz}z  <\infty.
\end{align}
Here note that in \eqref{2.25a}, the condition for $j=0$ reads as
$\int_0^{\varepsilon_0} |\Phi(z)| z^{-1} dz <\infty$. This implies
$\Phi(0) =0$ by continuity of $\Phi$.
\item [(c)] \underline{Smooth uni-valence at $z=1$}.  We assume $\Phi^{\prime}(1) \ne 0$ and $
\Phi(t) \ne \Phi(1)$ for any $t \in (0,1)\cup (1,\infty)$.
\end{itemize}

\begin{rem}
A prototypical $\Phi$ satisfying conditions (a)-(c)  is $\Phi(z) = z^{s}$ with $s>0$.
This corresponds to the usual fractional Laplacian case.  Other nontrivial examples are:
\begin{align*}
		& \Phi(z)= (m^2+ z)^{\frac s 2} -m, \qquad s>0; \\
		& \Phi (z) = z^{\frac 12} \tanh( z^{\frac 12});\\
		& \Phi(z)= z^{\frac 12} ( \tanh(z^{\frac 12}) )^{-1}.
\end{align*}
The Fourier symbols $|\xi| \tanh(|\xi|)$, $|\xi| (\tanh(|\xi|) )^{-1}$ play important roles in the Dirichlet to Neumann
	map theory in connection with linear water waves.

One should note that for $x>0$:
\begin{align*}
{\frac d {dx} \left( \frac {\tanh x} x \right)= -
\frac {\tanh x} {x^2} + \frac 1{ x \cosh^2 x} =
\frac 1{x^2 {\cosh ^2x}} \Bigl( x - \frac 12 \sinh ( 2x)  \Bigr) }<0.
\end{align*}
Thus $h(x) = \frac {x} {\tanh x} $ is strictly increasing.  
\end{rem}
\begin{rem}
Interestingly a class of nonlocal Helmholtz problems with nonlocal ``completely Bernstein" symbols 
 are also treated in recent \cite{David23} (see also \cite{BS22}). 
\end{rem}

Concerning \eqref{1.5a}, an immediate subtle technical issue is to ensure $\Phi(-\Delta) u \in \mathcal S^{\prime}(\mathbb R^d)$ for $u \in L^{\infty}(\mathbb R^d)$.  The next proposition clarifies this point. We first
examine the action of $\Phi(-\Delta)$ on test functions which are Schwartz.

\begin{prop} \label{pr1.1}
Suppose $\Phi \in C([0,\infty) ) \cap C^{\infty}((0, \infty) )$ satisfies the conditions (a)--(c). For $\psi
\in \mathcal S(\mathbb R^d)$, define
\begin{align*}
\widehat{ \Phi(-\Delta) \psi} (\xi) = \Phi (|\xi|^2) \widehat{\psi}(\xi), \qquad \xi \in \mathbb R^d.
\end{align*}
Then $\Phi(-\Delta) \psi \in L^1(\mathbb R^d)$ and  for some integers $k_1\ge 0$, $k_2\ge 0$,
\begin{align*}
\| \Phi(-\Delta) \psi \|_{L_x^1(\mathbb R^d)} \le C_{d,k_1, k_2, \Phi}
\sum_{|\alpha|\le k_2} \| (1+|x|^2)^{k_1} \partial^{\alpha} \psi \|_{L_x^{\infty}(\mathbb R^d)},
\end{align*}
where $C_{d, k_1, k_2, \Phi}>0$ is a constant depending only
on ($d$, $k_1$, $k_2$, $\Phi$).
\end{prop}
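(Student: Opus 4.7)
The plan is to cut the multiplier $\Phi(|\xi|^2)$ smoothly at the origin and estimate the low- and high-frequency contributions to $\Phi(-\Delta)\psi$ separately, with condition (a) controlling the high part and condition (b) the low part. Fix $\chi\in C_c^\infty([0,\infty))$ with $\chi\equiv 1$ on $[0,\varepsilon_0/2]$ and $\operatorname{supp}\chi\subset[0,\varepsilon_0]$, and decompose $\Phi(-\Delta)\psi=F_L+F_H$ through $\widehat{F_L}=\chi(|\xi|^2)\Phi(|\xi|^2)\widehat\psi$ and $\widehat{F_H}=(1-\chi(|\xi|^2))\Phi(|\xi|^2)\widehat\psi$.

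The high-frequency piece $F_H$ is easy: the multiplier $(1-\chi(|\xi|^2))\Phi(|\xi|^2)$ is $C^\infty$ on all of $\mathbb R^d$ (it vanishes in a neighborhood of the origin, and $\Phi\in C^\infty((0,\infty))$) with polynomially bounded $\xi$-derivatives by (a). Multiplying by $\widehat\psi\in\mathcal S$ yields a Schwartz $\widehat{F_H}$, so $F_H\in\mathcal S\subset L^1$, with $\|F_H\|_{L^1}$ bounded by a finite combination of Schwartz seminorms of $\psi$ of the desired form.

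The low-frequency piece $F_L$ is the heart of the argument. Since $\widehat{F_L}$ is compactly supported in $\{|\xi|\le\sqrt{\varepsilon_0}\}$, $F_L$ is smooth and bounded, so the $L^1$ bound reduces to polynomial decay $|F_L(x)|\lesssim (1+|x|)^{-(d+1)}$ at infinity. I would use the Fourier-duality estimate $|x^\beta F_L(x)|\lesssim \|\partial_\xi^\beta\widehat{F_L}\|_{L^1_\xi}$ for $|\beta|\le d+1$, expand by the Leibniz rule, and apply the chain-rule identity
\[
\partial_\xi^{\beta'}\Phi(|\xi|^2)=\sum_{\lceil|\beta'|/2\rceil\le j\le|\beta'|}c_{\beta',j}\,\xi^{\gamma_j}\,\Phi^{(j)}(|\xi|^2),\qquad|\gamma_j|=2j-|\beta'|.
\]
After passing to spherical coordinates and substituting $z=|\xi|^2$, each resulting integral collapses to
\[
\int_0^{\varepsilon_0} z^{j+(d-|\beta'|)/2-1}|\Phi^{(j)}(z)|\,dz,
\]
whose exponent is at least $j-1$ for $|\beta'|\le d$, so condition (b) supplies finiteness. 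Derivatives that land on $\widehat\psi$ cost only bounded factors, each in turn controlled by $\sum_{|\alpha|\le k_2}\|(1+|x|^2)^{k_1}\partial^\alpha\psi\|_{L^\infty}$ once $k_1,k_2$ are chosen large enough.

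The main obstacle lies in the borderline Leibniz term at $|\beta'|=d+1$ with all derivatives placed on $\Phi(|\xi|^2)$, whose naive exponent $z^{j-3/2}$ is narrowly outside what (b) provides. To close this I would Taylor-expand $\widehat\psi$ around $\xi=0$ to a sufficiently high order $M$, splitting $\widehat{F_L}$ into a finite sum of model pieces $\chi(|\xi|^2)\Phi(|\xi|^2)\xi^\gamma$ ($|\gamma|\le M$) multiplied by the moments $(\partial^\gamma\widehat\psi)(0)$, plus a remainder whose symbol vanishes to order $M+1$ at the origin. The additional factor $\xi^\gamma$ (for $|\gamma|\ge 1$) and the vanishing $O(|\xi|^{M+1})$ in the remainder both raise the exponent of $z$ in the reduced integrals by $|\gamma|/2$ respectively $(M+1)/2$, absorbing the missing $z^{-1/2}$ and placing every term in the regime already handled by condition (b); the $\gamma=0$ model piece $\mathcal F^{-1}(\chi(|\cdot|^2)\Phi(|\cdot|^2))$ depends only on $\chi$ and $\Phi$ and is verified directly to be in $L^1(\mathbb R^d)$ using condition (b) for $j=0$ near the origin and (a) away from it. The moments $(\partial^\gamma\widehat\psi)(0)$ are dominated by $\|x^\gamma\psi\|_{L^1}\lesssim \|(1+|x|^2)^{k_1}\psi\|_{L^\infty}$ for $k_1$ large, yielding the claimed bound with explicit $k_1,k_2$ depending only on $d$ and $\Phi$.
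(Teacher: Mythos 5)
Your decomposition into low and high frequencies and your treatment of the high-frequency piece via condition (a) match the paper. The gap is in the low-frequency piece, at exactly the borderline term you flag. Your entire strategy rests on absolute-value estimates of the form $|x^\beta F_L(x)|\lesssim\|\partial_\xi^\beta\widehat{F_L}\|_{L^1_\xi}$, and, as you correctly compute, at $|\beta|=d+1$ with all derivatives on $\Phi(|\xi|^2)$ this produces $\int_0^{\varepsilon_0}z^{j}|\Phi^{(j)}(z)|\,z^{-3/2}\,dz$, which is half a power of $z$ outside what condition (b) supplies. Your proposed repair --- Taylor-expanding $\widehat\psi$ at the origin --- does handle the $|\gamma|\ge 1$ model pieces and the remainder, but it merely funnels the whole difficulty into the single $\gamma=0$ piece $\mathcal F^{-1}\bigl(\chi(|\cdot|^2)\Phi(|\cdot|^2)\bigr)$, whose membership in $L^1(\mathbb R^d)$ you assert is ``verified directly.'' It is not: this function is precisely the object $\beta_1$ that the paper's Proposition \ref{pr2.1} is devoted to, and it is the technical heart of the whole proposition. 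Applying your own $x^\beta$-duality tool to it reproduces the identical $z^{-1/2}$ deficit (there is no longer any $\widehat\psi$ to expand and no extra factor $\xi^\gamma$ to spend), and condition (b) for $j=0$ alone --- mere integrability of $\Phi(z)/z$ near $0$, with no derivative information --- cannot yield $L^1$ decay of an inverse Fourier transform. So the argument is circular at its crux.

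What is genuinely needed, and what the paper does, is to exploit oscillation rather than absolute values: the paper first factors out $\psi$ entirely via Young's inequality ($\|\beta\|_{L^1}\lesssim\|\beta_1\|_{L^1}\|\psi\|_{L^1}$, which also makes your Taylor expansion unnecessary), writes $\beta_1$ as a radial oscillatory integral using the Bessel-function asymptotics \eqref{2.20a} for $F_d(\rho r)$, splits into the regimes $\rho r\lesssim1$ (handled by the $j=0$ part of (b)) and $\rho r\gg1$ (handled by repeated integration by parts against $e^{i\rho r}$, each derivative in $\rho$ buying a full factor $r^{-1}$), and only then lands on integrals of exactly the form appearing in condition (b). If you want to complete your write-up, you must either import this stationary-phase/Bessel argument for the $\gamma=0$ piece or replace it with an equivalent oscillatory-integral or Littlewood--Paley argument; as written, the key estimate is assumed rather than proved.
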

The proof of Proposition \ref{pr1.1} is given in Section 2.

Thanks to Proposition \ref{pr1.1}, for $u \in L_x^{\infty}(\mathbb R^d)$ we can define $\Phi(-\Delta) u \in \mathcal S^{\prime}(\mathbb R^d)$ as an element in $S^{\prime}(\mathbb R^d)$ via the following
\begin{align} \label{1.7aa}
\Bigl( \Phi(-\Delta) u \Bigr) ( \varphi) = \int_{\mathbb R^d}
u \Phi(-\Delta) \varphi dx, \qquad\forall\, \varphi \in \mathcal S(\mathbb R^d).
\end{align}

Alternatively and equivalently, we extend the usual $L^2$-pairing of Schwartz functions (see \eqref{L2pairing}) to 
 $\mathcal S^{\prime}(\mathbb R^d)$-$\mathcal S(\mathbb R^d)$ pairing as the following
(below $\overline{z}$ denotes the usual complex conjugate for $z\in \mathbb C$)
:
\begin{align} \label{1.8bb}
\langle \Phi(-\Delta)u, \psi \rangle := \int_{\mathbb R^d} u(x)  \overline{(\Phi(-\Delta) \psi)(x)} dx,
\qquad \forall\, \psi \in \mathcal S(\mathbb R^d).
\end{align}
In particular we have the estimate
\begin{align*}
|\langle \Phi(-\Delta)u, \psi \rangle| \lesssim  \| u \|_{L_x^{\infty}(\mathbb R^d)} \sum_{|\alpha|\le k_2} \| (1+|x|^2)^{k_1}
\partial^{\alpha} \psi\|_{L_x^{\infty}(\mathbb R^d)}.
\end{align*}
Thus in its natural weak formulation, \eqref{1.5a} reads as:
\begin{align}
\label{1.16a}
\text{$u \in L_x^{\infty}(\mathbb R^d)$~\ satisfies}~\ \langle u, \Phi(-\Delta) \psi \rangle = \langle u,  \Phi(1) \psi \rangle, \quad \forall\, \psi \in
\mathcal S(\mathbb R^d)~\mathrm{and}~\Phi~\mbox{satisfies (a)--(c)}.
\end{align}

\begin{rem}
The main difference between \eqref{1.7aa} and \eqref{1.8bb} is that the former is linear in the test function $\varphi$
whereas the latter is \emph{conjugate linear} in the test function $\psi$.
\end{rem}
\begin{rem}
  To put things into perspective, one should note
that in general it is a subtle issue to define   the action of fractional Laplacian on a 
general tempered distribution u,   since the fractional 
Laplacian multiplier does not preserve the Schwartz space. 
What we basically show is that for fractional Laplacian operators and slightly more general symbols,  one can start by carefully defining the action of the fractional operator on the Schwartz function, show that the corresponding norm depends only on the weighted Sobolev 
norm of the test function; then in a natural way one can define for (say) bounded function u, the corresponding tempered distribution  $(-\Delta)^s u$.    
The only place where Fourier transform enter,  is in the action of the symbol
on the Schwartz test functions.

\end{rem}

The main result of this paper is the following.

\begin{thm}[Classification of general Helmholtz, case $\Phi^{\prime}(1)\ne 0$] \label{thm1}
Let $d\ge 1$.
Suppose  $u \in L^{\infty}(\mathbb R^d)$ solves the problem \eqref{1.16a}.
Then the following hold:

\begin{enumerate}
\item $\mathrm{supp}(\widehat{u} ) \subset
K=\{ \xi:\; \text{$\xi=0$ or $|\xi|=1$} \}.$ More precisely we have
\begin{align} \label{3.2a}
\langle \widehat{u}, \phi \rangle =0, \qquad  \forall\, \phi \in C_c^{\infty}( \mathbb R^d \setminus  K).
\end{align}

\item If $\Phi(1) \ne 0$, then $ \mathrm{supp}(\widehat{u} )
\subset \{ \xi: \, |\xi| =1 \}$, and
\begin{align} \label{3.3a}
\langle \widehat{u}, \phi \rangle =0, \qquad
\forall\, \phi \in C_c^{\infty}(\mathbb R^d \setminus \{ \xi:\, |\xi|=1 \} ).
\end{align}
Furthermore we have in this case,
\begin{align}
\left\langle u, \mathcal F^{-1}\Bigl(  ( |\xi|^2-1) \psi(\xi)   \Bigr) \right\rangle =0, \qquad \forall\, \psi \in \mathcal S(\mathbb R^d).
\end{align}
\end{enumerate}
\end{thm}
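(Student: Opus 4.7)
The plan is to pass to the Fourier side and perform a division argument with the symbol $\Phi(|\xi|^2)-\Phi(1)$, whose zero set is sharply controlled by conditions (b) and (c). By Proposition~\ref{pr1.1}, $\Phi(-\Delta)\psi\in L^1(\mathbb R^d)$ for every $\psi\in\mathcal S$; pairing this against $u\in L^\infty$ via Parseval would recast \eqref{1.16a} as
\begin{equation*}
\int_{\mathbb R^d}\hat u(\xi)\,\overline{\bigl(\Phi(|\xi|^2)-\Phi(1)\bigr)\hat\psi(\xi)}\,d\xi=0,\qquad\forall\,\psi\in\mathcal S,
\end{equation*}
i.e., $\hat u$ is distributionally annihilated by $\Phi(|\xi|^2)-\Phi(1)$. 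Since (b) gives $\Phi(0)=0$ and (c) gives $\Phi(t)\ne\Phi(1)$ for $t\in(0,\infty)\setminus\{1\}$, the zero set of this symbol in $\mathbb R^d$ is exactly $K$, collapsing to $\{|\xi|=1\}$ when $\Phi(1)\ne 0$.

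For part (1), given $\phi\in C_c^\infty(\mathbb R^d\setminus K)$, I would set $\eta(\xi):=\phi(\xi)/(\Phi(|\xi|^2)-\Phi(1))$: on $\mathrm{supp}(\phi)$ the denominator is smooth by (a) (since $|\xi|>0$) and nonvanishing by (c), so $\eta\in C_c^\infty(\mathbb R^d)\subset\mathcal S$. Taking $\psi:=\mathcal F^{-1}(\eta)\in\mathcal S$ gives $(\Phi(|\xi|^2)-\Phi(1))\hat\psi=\phi$; substituting into the reformulated equation yields $\langle\hat u,\phi\rangle=0$, which is \eqref{3.2a}.

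For part (2), assume $\Phi(1)\ne 0$. Given $\phi\in C_c^\infty(\mathbb R^d\setminus\{|\xi|=1\})$, I would split $\phi=\phi_0+\phi_1$ with $\phi_1\in C_c^\infty(\mathbb R^d\setminus K)$ (handled by part (1)) and $\phi_0$ supported in a small ball $B(0,r_0)$ on which $|\Phi(|\xi|^2)-\Phi(1)|$ stays bounded below. Part (1) then forces $\hat u|_{B(0,r_0)}$ to be a finite-order distribution supported at the origin, of the form $\sum_{|\alpha|\le N}c_\alpha\partial^\alpha\delta_0$. The principal technical obstacle is that $\Phi(|\xi|^2)$ may fail to be $C^\infty$ at $\xi=0$, so the naive division $\phi_0/(\Phi(|\xi|^2)-\Phi(1))$ is merely continuous, not Schwartz. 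To bypass this, I would test the reformulated equation against the Schwartz family $\hat\psi_\alpha(\xi):=\xi^\alpha\rho(\xi/\epsilon)/\alpha!$ with $\rho\equiv 1$ near $0$. Writing $\Phi(|\xi|^2)-\Phi(1)=-\Phi(1)+(\Phi(|\xi|^2)-\Phi(0))$ and noting that the remainder $(\Phi(|\xi|^2)-\Phi(0))\hat\psi_\alpha$ vanishes at the origin to order strictly greater than $|\alpha|$ (by $\Phi(0)=0$ together with condition (b)), a direct computation yields
\begin{equation*}
\partial^\beta\bigl[(\Phi(|\xi|^2)-\Phi(1))\hat\psi_\alpha(\xi)\bigr]\bigm|_{\xi=0}=-\Phi(1)\,\delta_{\alpha\beta}\quad\text{for }|\beta|\le|\alpha|.
\end{equation*}
A downward induction on $|\alpha|$ starting from the top order $N$ then kills $c_\alpha$ for every multi-index, giving $\hat u|_{B(0,r_0)}=0$ and hence $\mathrm{supp}(\hat u)\subset\{|\xi|=1\}$.

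The concluding identity $\langle u,\mathcal F^{-1}((|\xi|^2-1)\psi)\rangle=0$ is then a clean consequence of condition (c). Since $\Phi'(1)\ne 0$, we can factor $\Phi(|\xi|^2)-\Phi(1)=(|\xi|^2-1)G(\xi)$ in a neighborhood of $\{|\xi|=1\}$ with $G$ smooth and nonvanishing; dividing $G$ out of the already-established relation $(\Phi(|\xi|^2)-\Phi(1))\hat u=0$ (trivial on $\mathbb R^d\setminus\{|\xi|=1\}$, where $\hat u$ vanishes) yields $(|\xi|^2-1)\hat u=0$, equivalent via Fourier inversion to the claimed pairing identity. I expect the hard part to be the $c_\alpha$-extraction in part (2), where the minimal regularity of $\Phi$ at the origin allowed by (b) must be carefully balanced against the vanishing order of the test functions $\hat\psi_\alpha$.
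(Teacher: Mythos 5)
Your part (1) is correct and is essentially the paper's own argument: on the compact support of $\phi$, which avoids both the origin and the unit sphere, the symbol $\Phi(|\xi|^2)-\Phi(1)$ is smooth and bounded away from zero, so the quotient $\phi/(\Phi(|\xi|^2)-\Phi(1))$ is an admissible test function. Likewise, your concluding factorization $\Phi(|\xi|^2)-\Phi(1)=(|\xi|^2-1)G(\xi)$ with $G$ smooth and nonvanishing near the sphere is sound once $\mathrm{supp}(\widehat u)\subset\{|\xi|=1\}$ is known, and it matches the paper's manipulation \eqref{3.13a}.

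The genuine gap is in your mechanism for removing the origin from the support, i.e.\ for killing the coefficients in $\widehat u|_{B(0,r_0)}=\sum_{|\alpha|\le N}c_\alpha\partial^\alpha\delta_0$. Your argument rests on the claim that $(\Phi(|\xi|^2)-\Phi(0))\widehat\psi_\alpha$ ``vanishes at the origin to order strictly greater than $|\alpha|$'' and on the resulting identity $\partial^\beta\bigl[(\Phi(|\xi|^2)-\Phi(1))\widehat\psi_\alpha\bigr]\big|_{\xi=0}=-\Phi(1)\delta_{\alpha\beta}$. Under hypotheses (a)--(b) this is not available: $\Phi$ is only \emph{continuous} at $z=0$, and (b) is an integral condition, $\sum_{j\le d+1}\int_0^{\varepsilon_0}z^{j-1}|\partial_z^j\Phi(z)|\,dz<\infty$, which yields no pointwise control such as $z^j\partial_z^j\Phi(z)\to 0$. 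For instance $\Phi'$ may carry thin spikes of height $z_n^{-1}$ on a sparse sequence $z_n\to 0$ while (b) still holds; then $\partial_\xi^\beta\bigl[\Phi(|\xi|^2)\xi^\alpha\bigr]$ fails to be continuous at $\xi=0$ even though the product is $o(|\xi|^{|\alpha|})$. Consequently $(\Phi(|\xi|^2)-\Phi(1))\widehat\psi_\alpha$ need not be $C^{|\beta|}$ near the origin, and the representation $\langle\widehat u,g\rangle=\sum_\beta c_\beta(-1)^{|\beta|}\partial^\beta g(0)$ --- which is justified only for $g\in C_c^\infty(B(0,r_0))$ --- cannot simply be evaluated on this $g$; extending it would require a quantitative approximation argument that your proposal does not supply. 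The paper circumvents exactly this point with Proposition \ref{pr2.1}: condition (b) is used to prove $\|\mathcal F^{-1}(\Phi(|\xi|^2)\chi(\xi/\varepsilon))\|_{L^1}\to 0$ as $\varepsilon\to 0$, whence $\langle u,\mathcal F^{-1}(\Phi(|\xi|^2)\chi(\xi/\varepsilon)\psi)\rangle\to 0$ by the trivial $L^\infty$--$L^1$ pairing, with no derivative of $\Phi$ at the origin ever being taken. If you replace your ``direct computation'' for the $\Phi(|\xi|^2)\widehat\psi_\alpha$ term by this $L^1$ estimate, the remaining term $-\Phi(1)\langle\widehat u,\widehat\psi_\alpha\rangle=-\Phi(1)(-1)^{|\alpha|}c_\alpha$ (here $\widehat\psi_\alpha$ \emph{is} genuinely $C_c^\infty$) kills each $c_\alpha$ directly, and your downward induction becomes unnecessary.
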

\begin{rem}
Since $\widehat u$ is compactly supported, the function $u$ can be identified as a $C^{\infty}(\mathbb R^d)$ function
thanks to Paley-Wiener.
Moreover in the case $\Phi(1) \ne 0$, we conclude that $u$ solves the classical Helmholtz equation, namely
\begin{align*}
-\Delta u = u \quad \text{in}\quad \mathbb R^d.
\end{align*}
\end{rem}

In Section 4 of this paper, we shall show that our theorem above already covers the general
Bernstein Helmholtz case in \cite{gmw2022}.  As was already mentioned, the conditions on
the symbol $\Phi(z)$ is already quite general and easy-to-check in practice.

\medskip

\medskip

On the other hand, as the avid reader may notice, the condition $\Phi^{\prime}(1) \ne 0$ may appear a bit restrictive. As a matter of
fact, one can also consider a slightly more non-degenerate condition.  That is, we assume $\Phi(1)\ne 0$, and for some $j_0\ge 1$, it holds that
\begin{align*}
\frac {d^j}{dz^j} \Phi(z) \Bigr|_{z=1}=: \Phi^{(j)}(1) =0, ~ 1\le j\le j_0-1, \quad\mathrm{and}\quad  \Phi^{(j_0)}(1) \ne 0.
\end{align*}
If $j_0=1$, we simply require $\Phi^{\prime}(1)\ne 0$.   For $j_0\ge 2$, $\Phi^{(j_0)}(1)$ is the first
nonzero coefficient after $\Phi^{\prime}(1)=0$.  As we shall show momentarily, under this general condition  the corresponding localization statement takes a slightly different form, namely: the Helmholtz equation
\begin{align*}
\Phi(-\Delta) u = \Phi(1) u
\end{align*}
can be effectively reduced to
\begin{align*}
(-\Delta -1)^{j_0}  u=0.
\end{align*}

For the sake of completeness, we now record the aforementioned slightly more general  technical assumptions on the function $\Phi:\, [0, \infty) \to \mathbb R$ as follows.
\begin{itemize}
\item[(c1)] \underline{Smoothness and mild growth at $z=\infty$}. We assume $\Phi \in
C([0,\infty) ) \cap C^{\infty}((0,\infty))$, and all derivatives of $\Phi$ are polynomially bounded as $z\to \infty$: namely for some $z_0\ge 2$,
it holds that for all $k\ge 0$,
\begin{align}  \label{1.26a}
|\partial^k \Phi (z) | \le C_{\Phi, k, z_0} z^{n_k}, \qquad \forall\, z\ge z_0,
\end{align}
where $C_{\Phi, k,z_0}>0$ is a constant depending only on ($\Phi$, $k$, $z_0$), and $n_k$ depends on $k$.

\item [(c2)] \underline{Mild singularity at $z=0$}. 
For
some $0<\varepsilon_0\le \frac 12$,
\begin{align}
\sum_{j=0}^{d+1} \int_0^{\varepsilon_0} |z|^j | (\partial_z^j \Phi)(z)| \cdot \frac{ dz}z  <\infty.
\end{align}
\item [(c3)] \underline{Non-degeneracy at $z=1$}.  We assume $\Phi(1) \ne 0$ and $
\Phi(t) \ne \Phi(1)$ for any $t \in (0,1)\cup (1,\infty)$.  Furthermore we assume for some integer $j_0\ge 1$,
it holds that
\begin{align} \label{1.26c}
\Phi^{(j)}(1) =0, ~ 1\le j\le j_0-1,\quad\mbox{and}\quad \Phi^{(j_0)}(1) \ne 0.
\end{align}
If $j_0=1$, we simply require $\Phi^{\prime}(1)\ne 0$.
\end{itemize}

\begin{thm}[Classification of general Helmholtz, case $\Phi^{(j_0)}(1)\ne 0$] \label{thm2}
Let $d\ge 1$.
Suppose  $u \in L^{\infty}(\mathbb R^d)$ solves the equation
\begin{align}
\langle u, \Phi(-\Delta) \psi \rangle= \langle u, \Phi(1) \psi\rangle, \qquad\forall\, \psi
\in \mathcal S(\mathbb R^d);
\end{align}
where $\Phi:\, [0,\infty)\to \mathbb R$ satisfies the conditions (c1)--(c3) (as specified in
\eqref{1.26a}--\eqref{1.26c}).
Then the following hold:

We have $ \mathrm{supp}(\widehat{u} )
\subset \{ \xi: \, |\xi| =1 \}$, and
\begin{align} \label{3.3a}
\langle \widehat{u}, \phi \rangle =0, \qquad
\forall\, \phi \in C_c^{\infty}(\mathbb R^d \setminus \{ \xi:\, |\xi|=1 \} ).
\end{align}
Furthermore we have in this case (below $j_0$ is the same integer as in \eqref{1.26c}),
\begin{align}
\left\langle u, \mathcal F^{-1}\Bigl(  ( |\xi|^2-1)^{j_0} \psi(\xi)   \Bigr) \right\rangle =0, \qquad \forall\, \psi \in \mathcal S(\mathbb R^d).
\end{align}
In yet other words, $u \in L^{\infty}(\mathbb R^d)$ can be identified as a $C^{\infty}$ function and
\begin{align}
(-\Delta -1)^{j_0} u=0.
\end{align}
\end{thm}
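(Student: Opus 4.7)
The plan is to mirror the strategy of Theorem \ref{thm1} while tracking the order of vanishing of the Fourier multiplier $\Phi(|\xi|^2)-\Phi(1)$ at the unit sphere. First I would re-establish the support statement $\mathrm{supp}(\widehat u)\subset\{|\xi|=1\}$ exactly as in Theorem \ref{thm1}. Indeed, condition (c3) retains the crucial injectivity $\Phi(t)\neq \Phi(1)$ for $t\in(0,1)\cup(1,\infty)$, while condition (c2) forces $\Phi(0)=0\neq\Phi(1)$, so $\Phi(|\xi|^2)-\Phi(1)$ is a nonvanishing $C^\infty$ symbol on $\mathbb R^d\setminus\{|\xi|=1\}$. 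Given any $\phi\in C_c^\infty(\mathbb R^d\setminus\{|\xi|=1\})$, one divides $\phi$ by $\Phi(|\xi|^2)-\Phi(1)$ to obtain a $C_c^\infty$ function whose inverse Fourier transform is Schwartz; feeding this into the weak equation yields $\langle\widehat u,\phi\rangle=0$.

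The second (and main) claim relies on a multiplicity refinement of this procedure. By Taylor's theorem at $z=1$,
\begin{align*}
\Phi(z)-\Phi(1)=(z-1)^{j_0}G(z),\qquad G(z)=\frac{\Phi^{(j_0)}(1)}{j_0!}+(z-1)\widetilde R(z),
\end{align*}
with $G\in C^\infty$ near $z=1$ and $G(1)\neq 0$. Consequently, on some open annular neighborhood $U$ of $\{|\xi|=1\}$ one has the factorization $\Phi(|\xi|^2)-\Phi(1)=(|\xi|^2-1)^{j_0}G(|\xi|^2)$ with $G(|\xi|^2)$ smooth and nowhere zero on $U$. Fix a cutoff $\chi\in C_c^\infty(\mathbb R^d)$ with $\chi\equiv 1$ in a smaller neighborhood of $\{|\xi|=1\}$ and $\mathrm{supp}\,\chi\subset U$.

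Given $\psi\in\mathcal S(\mathbb R^d)$, set
\begin{align*}
\eta(\xi):=\chi(\xi)\,\frac{\psi(\xi)}{G(|\xi|^2)}\in C_c^\infty(\mathbb R^d),\qquad \tilde\psi:=\mathcal F^{-1}(\eta)\in\mathcal S(\mathbb R^d).
\end{align*}
Testing the equation against $\tilde\psi$ and transferring to Fourier variables (legitimate by Proposition \ref{pr1.1}) produces
\begin{align*}
0=\bigl\langle \widehat u,\,(\Phi(|\xi|^2)-\Phi(1))\,\eta(\xi)\bigr\rangle=\bigl\langle \widehat u,\,\chi(\xi)(|\xi|^2-1)^{j_0}\psi(\xi)\bigr\rangle,
\end{align*}
where the second equality uses the factorization on $\mathrm{supp}\,\chi$. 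Since $\widehat u$ is supported on $\{|\xi|=1\}$ and $\chi\equiv 1$ there, the difference $(1-\chi)(|\xi|^2-1)^{j_0}\psi$ vanishes on a neighborhood of $\mathrm{supp}\,\widehat u$ and hence pairs to zero; one obtains $\langle\widehat u,(|\xi|^2-1)^{j_0}\psi(\xi)\rangle=0$, which is the identity announced in the theorem.

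Unwinding the Fourier transform, this reads $(-\Delta-1)^{j_0}u=0$ in $\mathcal S'(\mathbb R^d)$; since $\widehat u$ has compact support, Paley--Wiener promotes $u$ to a $C^\infty$ function and the PDE holds classically. The principal obstacle in this program is the passage from the formal inversion ``divide by $G(|\xi|^2)$'' to a bona fide Schwartz test function admissible in the weak formulation, since $G$ is only defined (and invertible) in a neighborhood of the unit sphere. The resolution is precisely the cutoff $\chi$, whose presence is tolerated thanks to the support information secured in the first step. Conditions (c1)--(c2) ensure that every Fourier multiplier encountered along the way retains enough regularity and decay for Proposition \ref{pr1.1} and the Schwartz-class manipulations to apply.
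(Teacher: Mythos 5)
Your second step is sound and in fact a slightly cleaner route than the paper's: the factorization $\Phi(z)-\Phi(1)=(z-1)^{j_0}G(z)$ with $G$ smooth and nonvanishing near $z=1$, the annular cutoff $\chi$, and the discarding of $(1-\chi)(|\xi|^2-1)^{j_0}\psi$ via the support information are all legitimate, and your $1/G$ is exactly the paper's modified factor $\frac{(|\xi|^2-1)^{j_0}}{\Phi(|\xi|^2)-\Phi(1)}$ (the paper instead reaches the same identity through a three-region decomposition with cutoffs $\chi(\xi/\varepsilon)$ and $\chi(\varepsilon\xi)$ and a limit $\varepsilon\to 0$).

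The gap is in your first step. You justify $\mathrm{supp}(\widehat u)\subset\{|\xi|=1\}$ by asserting that $\Phi(|\xi|^2)-\Phi(1)$ is a nonvanishing $C^\infty$ symbol on $\mathbb R^d\setminus\{|\xi|=1\}$ and dividing. But $\Phi$ is only assumed to lie in $C([0,\infty))\cap C^\infty((0,\infty))$, so $\Phi(|\xi|^2)$ need not be smooth at $\xi=0$: for the prototype $\Phi(z)=z^{1/2}$ the symbol is $|\xi|$. Consequently, for a test function $\phi\in C_c^\infty$ whose support meets the origin, the quotient $\phi/(\Phi(|\xi|^2)-\Phi(1))$ is merely continuous there and is not an admissible test function in the weak formulation \eqref{1.16a}. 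The division argument therefore only yields $\mathrm{supp}(\widehat u)\subset\{0\}\cup\{|\xi|=1\}$, which is precisely why Theorem \ref{thm1} states part (1) with the set $K$ containing the origin. Expelling the origin requires the paper's separate limiting argument: pair the equation against $\mathcal F^{-1}\bigl((\Phi(|\xi|^2)-\Phi(1))\chi(\xi/\varepsilon)\psi\bigr)$, invoke Proposition \ref{pr2.1} (which rests on the integrability hypothesis (c2) near $z=0$) to show $\langle u,\mathcal F^{-1}(\Phi(|\xi|^2)\chi(\xi/\varepsilon)\psi)\rangle\to 0$, and then use $\Phi(1)\neq 0$ to conclude $\langle u,\mathcal F^{-1}(\chi(\xi/\varepsilon)\psi)\rangle\to 0$ as $\varepsilon\to 0$. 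Once that ingredient is supplied, the remainder of your argument goes through.
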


\medskip

In the following subsection we fix some notation used throughout this paper.
\subsection*{Notation} For any two nonnegative quantities $X$ and $Y$, we write $X\lesssim Y$ or
$Y\gtrsim X$ if $X\le CY$ for some
harmless constant $C>0$. We write $X\ll Y$ or $Y\gg X$ if $X\le c Y$ for some sufficiently small constant $c>0$.
The needed smallness of the constant $c$ is usually clear from the context.

We denote by $\mathcal S(\mathbb R^d)=\mathcal S(\mathbb R^d\to \mathbb C)$ the usual space of complex-valued Schwartz functions and
$\mathcal S^{\prime}(\mathbb R^d)$ the space of tempered distributions.

For $u \in \mathcal S(\mathbb R^d)$,  we adopt the following convention for Fourier transform:
\begin{align*}
(\mathcal F u)(\xi) =\widehat u (\xi) = \int_{\mathbb R^d} u(y) e^{-iy\cdot \xi} dy \quad\mathrm{and}\quad
u(x) = \frac 1 {(2\pi)^d} \int_{\mathbb R^d} \widehat{u}(\xi) e^{i \xi \cdot x} d\xi
=: (\mathcal F^{-1} \widehat u )(x).
\end{align*}
Let $s>0$.  For $u\in \mathcal
S(\mathbb R^d)$, $d\ge 1$, the fractional Laplacian $\Lambda^s u
=(-\Delta)^{\frac s2} u $
is defined via Fourier transform as
\begin{align*}
\widehat{\Lambda^s u } (\xi) = |\xi|^s \widehat{u}(\xi), \qquad \xi \in \mathbb R^d.
\end{align*}
For $f_1:\, \mathbb R^d \to \mathbb C$, $f_2:\, \mathbb R^d \to \mathbb C$,  $f_1$, $f_2$ Schwartz, we denote
the usual $L^2$ pairing:
\begin{align}\label{L2pairing}
\langle f_1, f_2\rangle : = \int_{\mathbb R^d} f_1(x) \overline{f_2(x) } dx,
\end{align}
where $\overline{z}$ denotes the usual complex conjugate of $z\in \mathbb C$.  The usual
Plancherel formula reads
\begin{align*}
\langle \widehat f_1, \widehat f_2 \rangle = (2\pi)^d \langle f_1, f_2 \rangle.
\end{align*}
If we denote $f_3 =\widehat f_2$, then $f_2 = \mathcal F^{-1} (f_3)$. Thus for
$f_1$, $f_3 \in \mathcal S(\mathbb R^d)$, it holds that
\begin{align*}
\langle \widehat f_1, f_3 \rangle = (2\pi)^d \langle f_1, \mathcal F^{-1} (f_3) \rangle.
\end{align*}
More generally for tempered distribution $u$, we have
\begin{align*}
\langle \widehat u, \phi \rangle = (2\pi)^d \langle u, \mathcal F^{-1}(\phi) \rangle, \qquad
\forall\, \phi \in \mathcal S(\mathbb R^d).
\end{align*}

\section{Proof of Proposition \ref{pr1.1}}
\begin{proof}[Proof of Proposition \ref{pr1.1}]
We begin by noting that in the regime $t \gtrsim 1$, the function $\Phi(t)$ along with its derivatives
grow at most polynomially.  Let $\chi_{|\xi| \gtrsim 1}$ be a smooth cut-off function
localized to the regime $|\xi| \gtrsim 1$.
Since $\widehat \psi \in \mathcal S(\mathbb R^d)$,  it is not difficult to
check that $\mathcal F^{-1} ( \Phi(|\xi|^2) \chi_{|\xi|\gtrsim 1} \widehat{\psi} )
\in L_x^1(\mathbb R^d)$ and
\begin{align}
\| \mathcal F^{-1} ( \Phi(|\xi|^2) \chi_{|\xi|\gtrsim 1} \widehat{\psi} ) \|_{L_x^1(\mathbb R^d)}
\lesssim
\sum_{|\alpha|\le k_2} \| (1+|x|^2)^{k_1} \partial^{\alpha} \psi \|_{L^{\infty}(\mathbb R^d)},
\end{align}
where $k_1\ge 0$, $k_2\ge 0$ are integers.

It suffices for us to examine the piece
\begin{align}
\beta(x) = \int_{\mathbb R^d} \chi(\xi) \Phi(|\xi|^2) \widehat {\psi }(\xi) e^{i \xi \cdot x} d \xi,
\end{align}
where $\chi \in C_c^{\infty}(\mathbb R^d)$ is a radial bump function localized to $\{\xi:\, |\xi| \ll 1 \}$.
Clearly
\begin{align}
\| \beta \|_{L_x^1(\mathbb R^d)} \lesssim \| \beta_1 \|_{L_x^1(\mathbb R^d)} \| \psi\|_{L_x^1(\mathbb R^d)},
\end{align}
where
\begin{align}
\beta_1(x) = \int_{\mathbb R^d} \chi(\xi) \Phi(|\xi|^2)  e^{i \xi \cdot x} d \xi.
\end{align}
Thus to finish the proof of Proposition \ref{pr1.1}, we only need to prove the next proposition.
\end{proof}
\begin{prop} \label{pr2.1}
Let $\varepsilon_0$ be a small positive number such that $\chi$ is supported in $\{x\in\mathbb{R}^d\mid |x|\leq \varepsilon_0\}$. Then we have
\begin{align} \label{2.5a}
\boxed{
\left\| \mathcal F^{-1} \Bigl(
\Phi (|\xi|^2) \chi (\xi)  \Bigr) \right\|_{L_x^1(\mathbb R^d)}
\lesssim \sum_{j=0}^{d+1} \int_0^{\varepsilon_0} |z|^j | (\partial_z^j \Phi)(z)|
\cdot \frac{ dz}z.}
\end{align}
\end{prop}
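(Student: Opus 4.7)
The plan is to bound $\|\beta_1\|_{L^1(\mathbb R^d)}$, where $\beta_1(x) := \int_{\mathbb R^d}\chi(\xi)\Phi(|\xi|^2)e^{i\xi\cdot x}d\xi$, by splitting the $x$-integration into a near field $\{|x|\le 1\}$ and a far field $\{|x|\ge 1\}$. On the near field I use the crude pointwise bound $|\beta_1(x)|\le \|\chi\Phi(|\xi|^2)\|_{L^1_\xi}$; passing to polar coordinates in $\xi$ and substituting $z=|\xi|^2$ converts this into a multiple of $\int_0^{\varepsilon_0^2}|\Phi(z)|z^{d/2-1}dz = \int_0^{\varepsilon_0^2}|\Phi(z)|z^{d/2}\frac{dz}{z}$, which is dominated by the $j=0$ term of \eqref{2.5a} since $z^{d/2}\le 1$ on $(0,\varepsilon_0^2)\subset (0,1/4)$.

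For the far field I integrate by parts $d+1$ times in each coordinate direction. The identity $(ix_i)^{d+1}\beta_1(x) = (-1)^{d+1}\int\partial_{\xi_i}^{d+1}\bigl(\chi(\xi)\Phi(|\xi|^2)\bigr)e^{i\xi\cdot x}d\xi$ yields $|x_i|^{d+1}|\beta_1(x)|\le \int|\partial_{\xi_i}^{d+1}(\chi\Phi(|\xi|^2))|d\xi$; combined with $|x|^{d+1}\lesssim \max_i |x_i|^{d+1}$ and the convergence of $\int_{|x|\ge 1}|x|^{-(d+1)}dx$, this gives $\int_{|x|\ge 1}|\beta_1(x)|dx\lesssim \sum_{i=1}^d\int|\partial_{\xi_i}^{d+1}(\chi\Phi(|\xi|^2))|d\xi$. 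I then expand by Leibniz together with the chain-rule identity $\partial_{\xi_i}^m\Phi(|\xi|^2) = \sum_{k=\lceil m/2\rceil}^m c_{m,k}\xi_i^{2k-m}\Phi^{(k)}(|\xi|^2)$ (easily proved by induction on $m$); after returning to polar coordinates and substituting $z=|\xi|^2$, each resulting summand is controlled by a constant multiple of $\int_0^{\varepsilon_0^2}z^{k+(l-1)/2}|\Phi^{(k)}(z)|\frac{dz}{z}$, where $l\in\{0,\ldots,d+1\}$ is the number of derivatives landing on $\chi$ and $k$ ranges over $\{\lceil (d+1-l)/2\rceil,\ldots,d+1-l\}$.

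For every summand with $l\ge 1$, the exponent $k+(l-1)/2\ge k$ combined with $z\le \varepsilon_0^2<1$ gives $z^{k+(l-1)/2}\le z^k$, so the integral is dominated by the hypothesis term with $j=k\le d+1$. The main obstacle is the extremal case $l=0$, in which all $d+1$ derivatives land on $\Phi(|\xi|^2)$ and the naive weight becomes $z^{k-1/2}$, short by a factor $z^{1/2}$ of what \eqref{2.5a} supplies. To recover this factor I plan to exploit the radiality of $\chi$: the function $\tilde\chi(u):=\chi(\sqrt u)$ lies in $C_c^\infty([0,\infty))$, and after the substitution $u=|\xi|^2$ the offending integral becomes a one-dimensional oscillatory integral in $u$ against a Bessel-type kernel $K(u,|x|)$, which I integrate by parts in $u$ once more. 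Boundary terms vanish thanks to $\Phi(0)=0$ and the vanishing of $\tilde\chi$ and all its $u$-derivatives at $u=\varepsilon_0^2$; one such IBP transfers half an order of derivative back onto $\tilde\chi$ and restores the missing $z^{1/2}$ factor. Carrying out this radial integration by parts cleanly, and matching the resulting $u$-integrals to the hypothesis sum, is the delicate step of the argument.
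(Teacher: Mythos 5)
There is a genuine gap, and it sits exactly where you place it: the case in which all $d+1$ coordinate derivatives land on $\Phi(|\xi|^2)$. The structural problem is that your far-field argument decouples the $x$- and $\xi$-integrations before any cancellation can be used: once you bound $\int_{|x|\ge 1}|x|^{-(d+1)}\,dx$ by a constant and are left with $\int|\partial_{\xi_i}^{d+1}(\chi\,\Phi(|\xi|^2))|\,d\xi$, the substitution $z=|\xi|^2$ inevitably produces the weight $z^{k-1/2}$ instead of $z^k$, and this loss is real, not a bookkeeping artifact. For instance $\Phi(z)=z^{1/4}$ satisfies hypothesis (b) (each term is $\int_0^{\varepsilon_0}z^{1/4}\tfrac{dz}{z}<\infty$), yet with $k=\lceil (d+1)/2\rceil$ one has $\Phi^{(k)}(z)\sim z^{1/4-k}$ and $\int_0^{\varepsilon_0^2}z^{k-1/2}|\Phi^{(k)}(z)|\tfrac{dz}{z}\sim\int_0 z^{-5/4}\,dz=\infty$. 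So the intermediate quantity you reduce to is infinite for an admissible symbol, and no amount of massaging it afterwards can recover \eqref{2.5a}. The missing half power of $z$ has to come from keeping $|x|$ and $|\xi|$ coupled: this is what the paper does by writing $\beta_1$ in polar coordinates against the radial kernel $F_d(\rho r)$ (Bessel asymptotics, \eqref{2.20a}), splitting into $\rho r\lesssim 1$ and $\rho r\gg 1$, and only then integrating by parts in $\rho$ in the oscillatory regime. The payoff is that the subsequent $r$-integration runs only over $r\gtrsim 1/\rho$, so $\int_{r\gtrsim 1/\rho}r^{d-1}\cdot r^{-(d+1)}\,dr\lesssim\rho=z^{1/2}$, which is precisely the factor your computation lacks.

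Your proposed repair does not close this gap as sketched. After taking absolute values and decoupling the integrals there is no oscillation left to integrate by parts against, so setting up the ``one-dimensional oscillatory integral in $u=|\xi|^2$ against a Bessel-type kernel'' forces you back to the polar representation of the full kernel, i.e.\ essentially to the paper's argument; at that point the coordinate-wise Leibniz expansion should be abandoned rather than patched. Moreover, an additional integration by parts in $u$ on top of $d+1$ derivatives would generate $\Phi^{(d+2)}$, which hypothesis (b) does not control, and the boundary contribution at $u=0$ need not vanish, since (b) does not imply that $\Phi^{(k)}(0^+)$ is finite for $k\ge 1$ (again $\Phi(z)=z^{1/4}$). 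The near-field estimate and the $l\ge 1$ terms of your far-field expansion are fine, but the heart of the proposition is exactly the ``delicate step'' you defer, and the correct mechanism for it is the $\rho r\lesssim 1$ versus $\rho r\gg 1$ decomposition.
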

\begin{proof}[Proof of Proposition \ref{pr2.1}]
The case for dimension $d=1$ is left to the reader as an exercise.

We now consider for example for $d=3$, denoting $r=|x|\ge r_0\gg 1$ and $\rho =|\xi|$, we have
(below we slightly abuse the notation and still denote $\chi(\rho)=\chi(\xi)$)
\begin{equation}
\begin{aligned}
|\beta_1(x)| &\lesssim  \Bigl| \int_0^{\varepsilon_0} \chi (\rho) \Phi(\rho^2) \frac {\sin \rho r} {\rho r} \rho^2 d\rho \Bigr| \\
& \lesssim  \Bigl|
\underbrace{\int_0^{\varepsilon_0} \chi (\rho) \chi_1(r \rho) \Phi(\rho^2) \frac {\sin \rho r} {\rho r} \rho^2 d\rho}_{=:I_1(r)} \Bigr| + \Bigl| \underbrace{ \int_0^{\varepsilon_0} \chi (\rho) (1-\chi_1(r\rho) ) \Phi(\rho^2) \frac {\sin \rho r} {\rho r} \rho^2 d\rho
}_{=:I_2(r)} \Bigr|.
\end{aligned}
\end{equation}
In the above $\chi_1 \in C_c^{\infty}(\mathbb R)$ is an even function such that
$\chi_1(z)=1$ for $|z|\le 0.9$ and $ \chi_1(z)=0$ for $|z|\ge 1$.
Clearly
\begin{align}
\int_{r_0}^{\infty}| I_1(r) | r^2 dr \lesssim
\int_0^{\varepsilon_0} \chi (\rho ) |\Phi(\rho^2)| \rho^2 \Bigl( \int_{r\lesssim \rho^{-1}} r^2 dr \Bigr) d\rho \lesssim \int_0^{\varepsilon_0} |\Phi(\rho^2) | \frac {d\rho} {\rho}
\lesssim \int_0^{\varepsilon_0^2} | \Phi(z) | z^{-1} dz.
\end{align}
On the other hand,
\begin{align}
I_2(r) & =\frac 1r \int_0^{\varepsilon_0} \rho \chi(\rho) (1-\chi_1(r\rho) ) \Phi(\rho^2) \sin (\rho r) d\rho.
\end{align}
By using successive integration by parts, we have
\begin{align} \label{2.10a}
|I_2(r)| \lesssim
\frac 1 {r^4} \int_0^{\varepsilon_0} \Bigl|
\frac {d^3}{d\rho^3}
\left( \rho \chi(\rho) (1-\chi_1(r\rho) ) \Phi(\rho^2) \right) \Bigr| d\rho.
\end{align}
Thus
\begin{align}
\int_{r_0}^{\infty} |I_2(r)| r^2 dr \lesssim~ \text{R.H.S. of} ~\eqref{2.5a}.
\end{align}
Adding the estimates for $I_1(r)$ and $I_2(r)$, we obtain the proof for the case $d=3$.

For the general case $d\ge 2$, we only need to work with the expression
\begin{align}
\int_{r_0}^{\infty}
\Bigl| \int_0^{\varepsilon_0} \chi (\rho) \Phi(\rho^2) F_d (\rho r) \rho^{d-1} d\rho \Bigr| r^{d-1} dr,
\end{align}
where
\begin{align*}
F_d(\lambda) = \int_{\mathbb S^{d-1} } e^{i \lambda e_1 \cdot \omega} d\sigma(\omega).
\end{align*}
In Subsection 2.1, we collect some standard material on the Bessel functions and some needed auxiliary
estimates on the function $F_d(\lambda)$.

The regime $\rho r \lesssim 1$ is clearly under control, i.e.
\begin{align}
\int_{r_0}^{\infty}
\Bigl| \int_0^{\varepsilon_0} \chi (\rho) \Phi(\rho^2) F_d (\rho r)  \chi_{r \rho \lesssim 1} \rho^{d-1} d\rho \Bigr| r^{d-1} dr
\lesssim\;   \int_0^{\varepsilon_0} \rho^{-1}\chi(\rho) |\Phi(\rho^2)| d\rho \lesssim \int_0^{\varepsilon_0^2} | \Phi(z) | z^{-1} dz.
\end{align}
On the other hand, for $\lambda=\rho r \gg 1$,  we note that by \eqref{2.20a} and taking $K$ large
\begin{align*}
\left| F_{d}(\lambda) - \text{finitely many terms of the form $\lambda^{-\alpha} e^{i\lambda}$}
\right| \lesssim \lambda^{-K}.
\end{align*}
Clearly the error term is under control:
\begin{align}
\int_{r_0}^{\infty}
 \int_0^{\varepsilon_0} \chi (\rho) |\Phi(\rho^2) | (r \rho)^{-K}  \chi_{r \rho \gg 1} \rho^{d-1} d\rho  r^{d-1} dr
\lesssim\;   \int_0^{\varepsilon_0} \chi(\rho) |\Phi(\rho^2)| \frac {d\rho} {\rho} \lesssim \int_0^{\varepsilon_0^2} | \Phi(z) | z^{-1} dz.
\end{align}
It remains to treat the terms
\begin{align}
\int_{r_0}^{\infty}
\Bigl| \int_0^{\varepsilon_0} \chi (\rho) \Phi(\rho^2)  (r \rho)^{-\alpha} e^{ir \rho}  \chi_{r \rho \gg 1} \rho^{d-1} d\rho \Bigr| r^{d-1} dr.
\end{align}
One can perform successive integration by parts in much the same way as in \eqref{2.10a}.
We omit the details.
\end{proof}

\subsection{Bessel functions and auxiliary estimates for the function $F_d(\lambda)$}
For $\nu>-\frac 12$, we recall the following formula for the standard Bessel function $J_{\nu}$
\begin{align}
J_{\nu}(\lambda)
= \frac 1 {2^{\nu} \Gamma(\nu+\frac 12) \sqrt{\pi}}
\lambda^{\nu} \int_{-1}^1 e^{i \lambda t} (1-t^2)^{\nu-\frac 12} dt, \qquad \lambda> 0.
\end{align}
We shall need the well-known asymptotic formula for $J_{\nu}(\lambda)$ (see \cite[Section 17.5]{ww1920} or \cite{Bformula}):
here we assume $\nu\ge 0$, $\lambda \gg 1$,  then
\begin{align}
J_{\nu}(\lambda) \sim \left(\frac 2 {\pi \lambda}  \right)^{\frac 12}
\Bigl( \cos \omega_{\lambda}
\sum_{k=0}^{\infty} (-1)^k \frac {a_{2k}(\nu)} {\lambda^{2k}}
-\sin \omega_{\lambda} \sum_{k=0}^{\infty}
(-1)^k \frac {a_{2k+1}(\nu)} {\lambda^{2k+1}} \Bigr),
\end{align}
where
\begin{align*}
\omega_{\lambda}= \lambda -\frac 12 \nu \pi- \frac 14 \pi, \quad \mbox{and}\quad
a_k(\nu)
= \frac {(4\nu^2-1) (4\nu^2-3^2) \cdots (4\nu^2-(2k-1)^2)} {k! 8^k}.
\end{align*}
In particular, for any integer $K\ge 1$, we have
\begin{align} \label{2.15a}
\boxed{
\left| J_{\nu}(\lambda)
-\lambda^{-\frac 12} \cos \lambda \sum_{j=0}^K
\alpha_{j,\nu} \lambda^{-j}
-\lambda^{-\frac 12}\sin \lambda \sum_{j=0}^K
\beta_{j,\nu} \lambda^{-j} \right| \le C_{K, \nu} \lambda^{-K-\frac 32}, \qquad \forall\,
\lambda \ge 10,
}
\end{align}
where $C_{K,\nu}>0$ depends on ($K$, $\nu$), and $\alpha_{j,\nu}$,
$\beta_{j,\nu}$ are computable coefficients.

Consider dimension $d\ge 2$ and denote by $d\sigma =d\sigma (\omega)$ the standard spherical
measure on the unit-sphere $\mathbb S^{d-1}=\{ \omega \in \mathbb R^d:\; |\omega|=1 \}$.
Denote $e_1=(1,0,\cdots,0)^{T}$.
Then for $\lambda>0$,
\begin{equation*}
\begin{aligned}
F_{d}(\lambda) &= \int_{\mathbb S^{d-1}} e^{i \lambda \omega \cdot e_1} d\sigma(\omega)  = c_d^{(1)} \int_0^{\pi} e^{i \lambda \cos \phi_1} \sin^{d-2} \phi_1 d\phi_1  \\
&= c_d^{(2)} \int_{-1}^1 e^{i \lambda t} (1-t^2)^{\frac {d-3}2} dt  = c_d^{(3)} \lambda^{-\frac {d-2}2} J_{\frac {d-2}2}(\lambda),
\end{aligned}
\end{equation*}
where $c_d^{(1)}>0$, $c_d^{(2)}>0$, $c_d^{(3)}>0$ are constants depending only on the dimension $d$.

By \eqref{2.15a}, we obtain (below $a_{j,d}$, $b_{j,d}$ are coefficients)
for any integer $K\ge 1$
\begin{align} \label{2.20a}
\boxed{
\left|F_{d}(\lambda)
- \lambda^{-\frac {d-1}2}
\cos \lambda \sum_{j=0}^K a_{j,d} \lambda^{-j}
-\lambda^{-\frac {d-1}2}
\sin \lambda \sum_{j=0}^K b_{j,d} \lambda^{-j}
\right| \le \tilde C_{K,d} \lambda^{-K-\frac {d+1}2},
\qquad \forall\, \lambda\ge 10,}
\end{align}
where $\tilde C_{K,d}>0$ depends only on ($K$, $d$).

\section{Proof of Theorem \ref{thm1} and Theorem \ref{thm2}}
To prove Theorem \ref{thm1}, we only need to prove the following theorem.
\begin{thm} \label{t4b}
Let $\Phi$ satisfy the conditions (a)--(c) (see \eqref{2.25aa}--\eqref{2.25a}).
Suppose  $u \in L^{\infty}(\mathbb R^d)$ and
satisfy
\begin{align}
\label{3.1a}
\left\langle u, ~\mathcal F^{-1}\Bigl(  (\Phi(|\xi|^2)-\Phi(1) ) \phi(\xi)   \Bigr) \right\rangle =0, \qquad \forall\, \phi
\in \mathcal S(\mathbb R^d).
\end{align}
Then the following hold:

\begin{enumerate}
\item $\mathrm{supp}(\widehat{u} ) \subset
K=\{ \xi:\; \text{$\xi=0$ or $|\xi|=1$} \}.$ More precisely we have
\begin{align} \label{3.2a}
\langle \widehat{u}, \phi \rangle =0, \qquad  \forall\, \phi \in C_c^{\infty}( \mathbb R^d \setminus  K).
\end{align}

\item If $\Phi(1) \ne 0$, then $ \mathrm{supp}(\widehat{u} )
\subset \{ \xi: \, |\xi| =1 \}$, and
\begin{align} \label{3.3a}
\langle \widehat{u}, \phi \rangle =0, \qquad
\forall\, \phi \in C_c^{\infty}(\mathbb R^d \setminus \{ \xi:\, |\xi|=1 \} ).
\end{align}
Furthermore we have in this case,
\begin{align}
\left\langle u, \mathcal F^{-1}\Bigl(  ( |\xi|^2-1) \psi(\xi)   \Bigr) \right\rangle =0, \qquad \forall\, \psi \in \mathcal S(\mathbb R^d).
\end{align}
\end{enumerate}
\end{thm}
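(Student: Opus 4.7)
The plan is to establish part (1) by a direct division trick, and then to bootstrap to part (2) via a convolution decomposition combined with the factorization $\Phi(|\xi|^2)-\Phi(1)=(|\xi|^2-1)G(\xi)$ coming from $\Phi'(1)\neq 0$. For part (1), given $\phi\in C_c^{\infty}(\mathbb{R}^d\setminus K)$, observe that on $\mathrm{supp}(\phi)$ one has $|\xi|>0$ (so $\Phi(|\xi|^2)\in C^{\infty}$ by condition (a)) and $|\xi|\neq 1$ (so $\Phi(|\xi|^2)\neq \Phi(1)$ by condition (c)). Thus $\psi(\xi):=\phi(\xi)/(\Phi(|\xi|^2)-\Phi(1))$, extended by zero outside $\mathrm{supp}(\phi)$, lies in $C_c^{\infty}(\mathbb{R}^d\setminus K)\subset\mathcal{S}(\mathbb{R}^d)$ and satisfies $(\Phi(|\xi|^2)-\Phi(1))\psi=\phi$. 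Feeding $\psi$ into \eqref{3.1a} yields $\langle u,\mathcal{F}^{-1}(\phi)\rangle=0$, which is \eqref{3.2a}.

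For part (2)(a), assume $\Phi(1)\neq 0$. Since $\widehat u$ is compactly supported in $K$ by part (1), $u$ is smooth by Paley--Wiener--Schwartz. Pick a real, even $\eta\in C_c^{\infty}(\mathbb{R}^d)$ with $\eta\equiv 1$ on a neighborhood of $\{|\xi|=1\}$ and $\eta\equiv 0$ on a neighborhood of $\{0\}$, and set $k:=\mathcal{F}^{-1}(\eta)\in\mathcal{S}$ and $u_1:=k\ast u$. Then $u_1\in L^{\infty}\cap C^{\infty}$ by Young's inequality, $\widehat{u_1}=\eta\widehat u$ is supported on $\{|\xi|=1\}$, and $\widehat{u-u_1}=(1-\eta)\widehat u$ is supported at $\{0\}$; hence $u-u_1$ is a polynomial, and being bounded it equals a constant $c$. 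Now test the hypothesis against $\phi\in C_c^{\infty}(B(0,\tfrac 12))$ with $\phi(0)=1$, and abbreviate $h:=(\Phi(|\xi|^2)-\Phi(1))\phi$. The constant piece contributes $\langle c,\mathcal F^{-1}(h)\rangle = c\,\overline{h(0)} = -c\,\Phi(1)$, while a Fubini computation (using that $k$ is real and even, so $\widehat k=\eta$) gives the identity $\langle u_1,\mathcal F^{-1}(h)\rangle = \langle u,\mathcal F^{-1}(\eta h)\rangle$. Since $\eta\phi\in C_c^{\infty}\subset\mathcal S$, applying \eqref{3.1a} to $\eta\phi$ gives $\langle u,\mathcal F^{-1}(\eta h)\rangle=0$. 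Combining forces $c\,\Phi(1)=0$, and $\Phi(1)\neq 0$ yields $c=0$, proving \eqref{3.3a}.

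For part (2)(b), the Taylor factorization $\Phi(|\xi|^2)-\Phi(1)=(|\xi|^2-1)G(\xi)$ holds on a neighborhood $U$ of $\{|\xi|=1\}$ with $G\in C^{\infty}(U)$ and $G=\Phi'(1)\neq 0$ on $\{|\xi|=1\}$; shrinking $U$ if necessary, $G$ is nonvanishing on $U$. Pick $\eta_1\in C_c^{\infty}(U)$ with $\eta_1\equiv 1$ near $\{|\xi|=1\}$. Given $\psi\in\mathcal S$, split $\psi=\psi\eta_1+\psi(1-\eta_1)$. The cutoff piece $\tilde\phi:=\psi\eta_1/G\in C_c^{\infty}(U)\subset\mathcal S$ obeys $(\Phi(|\xi|^2)-\Phi(1))\tilde\phi=(|\xi|^2-1)\psi\eta_1$, so \eqref{3.1a} applied to $\tilde\phi$ gives $\langle u,\mathcal F^{-1}((|\xi|^2-1)\psi\eta_1)\rangle=0$. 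The remaining Schwartz function $(|\xi|^2-1)\psi(1-\eta_1)$ is supported outside a neighborhood of $\{|\xi|=1\}$; combined with $\mathrm{supp}(\widehat u)\subset\{|\xi|=1\}$ from part (2)(a) and Plancherel, its pairing with $u$ vanishes. Adding the two contributions yields the desired identity. The main obstacle I anticipate is the part (2)(a) step: because $\Phi(|\xi|^2)$ is merely continuous (not $C^{\infty}$) at $\xi=0$, one cannot naively pair $\widehat{u_1}$ against $h$ through Plancherel, and the crux is to transfer the cutoff $\eta$ from $u_1$ onto the test function via the convolution structure, converting the problematic symbol $h$ into the genuinely Schwartz function $\eta h$ to which \eqref{3.1a} may be legitimately applied.
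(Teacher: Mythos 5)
Your proof is correct, and while part (1) coincides with the paper's argument (dividing the test function by $\Phi(|\xi|^2)-\Phi(1)$, which is bounded away from zero on the compact support away from $K$), your treatment of part (2) is genuinely different from the paper's. The paper removes the possible mass of $\widehat u$ at the origin by a limiting argument: it inserts cutoffs $\chi(\xi/\varepsilon)$ and $\chi(\varepsilon\xi)$, shows that the low-frequency contribution $\langle u,\mathcal F^{-1}(\Phi(|\xi|^2)\chi(\xi/\varepsilon)\psi)\rangle$ tends to $0$ by the quantitative $L^1$ kernel bound of Proposition \ref{pr2.1}, and takes a linear combination using $\Phi(1)\neq 0$ before passing to the limit $\varepsilon\to 0$; the middle annular piece is then handled by the same smooth factorization $\frac{|\xi|^2-1}{\Phi(|\xi|^2)-\Phi(1)}$ that you use. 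You instead kill the origin once and for all by writing $u=u_1+c$ with $\widehat{u-u_1}$ supported at $\{0\}$ (hence $u-u_1$ a bounded polynomial, i.e.\ a constant), and testing against a single $\phi$ with $\phi(0)=1$ to extract $c\,\Phi(1)=0$ from $\Phi(0)=0$; after that your Hadamard-type factorization near the sphere gives the final identity with no limiting procedure. Your route is arguably cleaner at the origin since it uses only the qualitative facts $\Phi(0)=0\neq\Phi(1)$, though it still leans implicitly on Proposition \ref{pr1.1} to guarantee $\mathcal F^{-1}\bigl((\Phi(|\xi|^2)-\Phi(1))\phi\bigr)\in L^1$, which is what legitimizes both the Fubini step and the evaluation $\int\mathcal F^{-1}(h)\,dx=h(0)=-\Phi(1)$; you should state that reliance explicitly. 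One further small remark: your part (2)(b) genuinely requires the conclusion of (2)(a) (not merely part (1)), since the discarded piece $(|\xi|^2-1)\psi(1-\eta_1)$ does not vanish at $\xi=0$; you do invoke (2)(a) there, so the logic is sound.
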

\begin{proof}
We sketch the details.

(1)  Consider $\phi \in C_c^{\infty}( \mathbb R^d \setminus \{ \text{$\xi=0$ or $|\xi|=1$}\})$. Clearly we
 have the decomposition
 \begin{align*}
 \phi = \phi_1 + \phi_2,
 \end{align*}
 where $\phi_1 \in C_c^{\infty}(\{\xi:\, 0<|\xi|<1 \})$ and $\phi_2 \in C_c^{\infty}(\{\xi:\, |\xi|>1\})$.
 With no loss we may assume that
 $\phi_1 \in C_c^{\infty}(\{\xi:\, \delta_1<|\xi|<1-\delta_1 \})$ and
{$\phi_2 \in C_c^{\infty}(\{\xi:\, 1+\delta_1<|\xi|<\frac 1 {\delta_1} \})$} for some $\delta_1>0$ sufficiently small.
 This assumption is harmless since $\phi_1$ and $\phi_2$ are both compactly supported.

 By our assumption (c) on the function $\Phi$, we have
 \begin{align}
 {\sup_{\substack{\delta_1<|\xi|<1-\delta_1\\
 \text{or }1+\delta_1<|\xi|<\frac 1 {\delta_1} }}
 \frac 1 {|\Phi(|\xi|^2) - \Phi(1) |} \lesssim 1.}
 \end{align}
 This is because $\Phi$ is smooth and $\Phi(t) \ne \Phi(1)$ for any $t\in (0,1) \cup (1,\infty)$.

It is then  not difficult to check that
 $$
 \frac{\phi_1(\xi)}{\Phi(|\xi|^2)-\Phi(1)}\in C_c^\infty(\{\xi:\, 0<|\xi|<1 \}),
 \quad \frac{\phi_2(\xi)}{\Phi(|\xi|^2)-\Phi(1)}\in C_c^\infty(\{\xi:\, |\xi|>1 \}).
 $$
 Then
\begin{equation*}
\begin{aligned}
 &\langle \widehat u, \phi_1 \rangle
 =\left\langle \widehat u,
 (\Phi(|\xi|^2)-\Phi(1))\cdot\frac{\phi_1(\xi)}{\Phi(|\xi|^2)-\Phi(1)}\right\rangle=0; \notag \\
& \langle \widehat u, \phi_2 \rangle
 =\left\langle \widehat u,
 (\Phi(|\xi|^2)-\Phi(1))\cdot\frac{\phi_2(\xi)}{\Phi(|\xi|^2)-\Phi(1)}\right\rangle=0.
 \end{aligned}
\end{equation*}
Thus \eqref{3.2a} holds.

(2). The case $\Phi(1) \ne 0$.  Choose $\chi \in C_c^{\infty}(\mathbb R^d)$
such that $\chi(z) =1$ for $|z| \le \frac 12$ and $\chi(z)=0$ for $|z| \ge 1$.
Clearly
\begin{align*}
& \left\langle u, \mathcal F^{-1}\left( (\Phi(|\xi|^2) -\Phi(1) ) \chi\left(\frac {\xi}{\varepsilon} \right)  \psi(\xi) \right) \right\rangle =0;
\qquad (\text{by \eqref{3.1a}})\notag \\
&\lim_{\varepsilon\to 0}\left \langle u, \mathcal F^{-1} \left(  \Phi(|\xi|^2) \chi\left(\frac {\xi} {\varepsilon} \right)
\psi (\xi) \right) \right\rangle=0;   \qquad (\text{by Proposition \ref{pr2.1}}) \notag \\
&\lim_{\varepsilon\to 0}\left \langle u, \mathcal F^{-1} \left(  |\xi|^2\chi\left(\frac {\xi} {\varepsilon} \right)
\psi (\xi) \right) \right\rangle=0.   \qquad (\text{obvious}) \notag \\
\end{align*}
A suitable linear combination of the above yields (here we use $\Phi(1) \ne 0$)
\begin{equation*}
\lim\limits_{\varepsilon \to 0}
\left\langle u, \mathcal F^{-1}\Bigl(  ( |\xi|^2-1) \chi \left( \frac {\xi}{\varepsilon} \right) \psi (\xi ) \Bigr) \right\rangle =0.
\end{equation*}
On the other hand, it is not difficult to check that
\begin{align*}
\lim_{\varepsilon \to 0}
\left\langle u,
\mathcal F^{-1} \Bigl( (|\xi|^2-1)  (1- \chi(\varepsilon \xi) ) \psi (\xi) \Bigr) \right \rangle=0.
\end{align*}
Note that $\left(1-\chi\left(\frac {\xi}{\varepsilon}\right) \right) (1-\chi(\varepsilon \xi) )=
1-\chi(\varepsilon \xi)$. Thus
\begin{align}
\lim_{\varepsilon \to 0}
\left\langle u,
\mathcal F^{-1} \Bigl( (|\xi|^2-1)  \left(1-\chi\left(\frac {\xi}{\varepsilon} \right) \right)(1- \chi(\varepsilon \xi) ) \psi (\xi) \Bigr) \right \rangle=0.
\end{align}
We now only need to check for each small $\varepsilon>0$ the identity
\begin{align} \label{3.12a}
\left \langle u, \mathcal F^{-1} \Bigl( (|\xi|^2-1)
\underbrace{ \left(1-\chi\left(\frac{\xi}{\varepsilon}\right)\right) \chi(\varepsilon \xi) \psi (\xi) }_{\psi_{\varepsilon}}  \Bigr) \right \rangle=0.
\end{align}
Observe that $\psi_{\varepsilon} \in C_c^{\infty}$ and
\begin{align} \label{3.13a}
( |\xi|^2-1) \psi_{\varepsilon} (\xi)
 = (\Phi(|\xi|^2)- \Phi(1) ) \cdot \underbrace{\frac { |\xi|^2-1} {\Phi(|\xi|^2) -\Phi(1) }
 \psi_{\varepsilon} (\xi)}_{\text{$\in C_c^{\infty}(\mathbb R^d)$}}.
 \end{align}
 Here we use the crucial assumption (c) on $\Phi$, namely: 1) near $|\xi|=1$, $\Phi^{\prime}(1) \ne 0$;
 2) away from $|\xi|=1$ (and in a compact neighborhood of $|\xi|=1$), $|\Phi(|\xi|^2) -\Phi(1)|\gtrsim 1$.
 These two facts yield that $\frac {|\xi|^2-1} {\Phi(|\xi|^2)-\Phi(1)}$  can be defined as a smooth function in
 the whole neighborhood of $|\xi|=1$.

 Thus \eqref{3.12a} holds and we have
\begin{align}
\left\langle u, \mathcal F^{-1}\Bigl(  ( |\xi|^2-1) \psi(\xi)   \Bigr) \right\rangle =0, \qquad \forall\, \psi \in \mathcal S(\mathbb R^d).
\end{align}
The statement \eqref{3.3a} can be proved along similar lines. We omit the details.
\end{proof}

\begin{proof}[Proof of Theorem \ref{thm2}]
The main modification is in \eqref{3.13a}:
\begin{align}
( |\xi|^2-1)^{j_0} \psi_{\varepsilon} (\xi)
 = (\Phi(|\xi|^2)- \Phi(1) ) \cdot \underbrace{\frac {( |\xi|^2-1)^{j_0}} {\Phi(|\xi|^2) -\Phi(1) }
 \psi_{\varepsilon} (\xi)}_{\text{$\in C_c^{\infty}(\mathbb R^d)$}}.
 \end{align}
Clearly the result follows.
\end{proof}

\section{Connection with the Bernstein Helmholtz case in \cite{gmw2022}}
We now show that in the more general Bernstein Helmholtz case introduced in \cite{gmw2022},
the conditions on $\Phi(\lambda)$ in \cite{gmw2022} are stronger than
our conditions on the function $\Phi(\lambda)$.

Recall that in \cite{gmw2022}, one assumes that $\Phi(\lambda)$ is a complete Bernstein function,
and in the corresponding harmonic extension problem, the weight function $a(t) \in A_2$ (in particular
$a$ is locally integrable) and satisfies
$a(t) \sim t^{\alpha}$ for $t\gg 1$ where $|\alpha|<1$.

We shall show that in \cite{gmw2022},  as long as
$\Phi$ is complete Bernstein, $a$ is weakly integrable, $a(t) \sim t^{\alpha}$ for
$t\gg 1$ where $|\alpha|<1$, then such $\Phi$ will satisfy our conditions (a)--(c)
(see \eqref{2.25aa}--\eqref{2.25a}) with the property $\Phi(1) \ne 0$.

1) Since $\Phi(\cdot):\, [0, \infty) \to [0, \infty)$ is complete Bernstein, we have
\begin{align} \label{2.42a}
	\Phi(\lambda) = c_1 + c_2 \lambda + \int_{ (0,\infty)} \frac {\lambda}{\lambda+s} \frac {m(ds)}s,
\end{align}
for some constants $c_1\ge 0$, $c_2\ge 0$, and the nonnegative measure $m$ satisfies
\begin{align} \label{2.42b}
	\int_{(0,\infty)} \frac 1 {1+s} \frac {m(ds)}{s} <\infty.
\end{align}
Clearly $\Phi \in C^{\infty} ( (0,\infty) ) \cap C([0, \infty) )$.  It is easy to check that
\eqref{2.25aa} holds.

2) We check \eqref{2.25a}.  Although in general the exact profile of the Krein correspondence
$\Phi \leftrightarrow a$ is hard to determine,  we can work out the asymptotic information
via the quadratic form  inequality (cf. Theorem II of \cite{KM17}, note that the convention
of Fourier transform therein differs from ours by a constant) : namely
\begin{align}
	\int_0^{\infty} \int_{\mathbb R} a(t) ( |\partial_t u|^2 + |\partial_x u |^2) dx dt
	\ge  \mathrm{const} \cdot \int_{\mathbb R} \Phi(|\xi|^2) |\widehat f(\xi) |^2 d\xi,
\end{align}
where $u(0, x) =f (x)$.  Note that the equality (with sharp constants) is achieved when $u$ is a suitable
harmonic extension of $f$.

Setting $\widehat u(t, \xi) = e^{-t |\xi|} \widehat f(\xi)$, we obtain
\begin{align}
	\int_{\mathbb R} \Bigl(\int_0^{\infty} a(t) e^{-2t |\xi| } dt \Bigr) |\xi|^2 |\widehat f(\xi)|^2 d\xi
	\gtrsim \int_{\mathbb R} \Phi(|\xi|^2) |\widehat f(\xi)|^2 d\xi.
\end{align}
Since by assumption $a(t) \sim t^{\alpha}$ for $t\gg 1$ and $|\alpha|<1$, we have
(below $R_0\gg 1$ is the constant for which $a(t) \sim t^{\alpha}$ when $t\ge R_0$)
\begin{align}
	\int_0^{R_0} a(t) dt \int_{\mathbb R} |\xi|^2 |\widehat f(\xi)|^2 d \xi
	+ \int_{\mathbb R} |\xi|^{1-\alpha} |\widehat f(\xi)|^2 d\xi
	\gtrsim \int_{\mathbb R} \Phi(|\xi|^2) |\widehat f(\xi)|^2 d\xi.
\end{align}

Since by assumption $a$ is locally integrable, we have $\int_0^{R_0} a(t) dt \lesssim 1$.
Since $|\alpha|<1$, we have for all $\widehat f$ with support in $\{\xi:  |\xi|<1 \}$,
\begin{align}
	\int_{|\xi|<1} |\xi|^{1-\alpha} |\widehat f(\xi)|^2 d\xi
	\gtrsim \int_{|\xi|<1} \Phi(|\xi|^2) |\widehat f(\xi)|^2 d\xi.
\end{align}
By choosing
suitable $\hat f (\xi) \sim |\xi|^{-1+\delta}$ ($2\delta>\alpha$)  when $|\xi| \ll 1$, we obtain
\begin{align*}
	\int_{|\xi|\ll 1} \Phi(|\xi|^2) |\xi|^{-2+2\delta} d\xi  \lesssim 1.
\end{align*}
Here we note that $\alpha<1$, and we can choose $2\delta = 1-\eta$ for some $\eta >0$ sufficiently small.
This easily implies
\begin{align*}
	\int_0^{\varepsilon_0} |\Phi(z)| |z|^{-1}  dz \lesssim 1.
\end{align*}
Note that here we actually proved $c_1=0$ in \eqref{2.42a}.
By \eqref{2.42a}, we have for $\lambda>0$,
\begin{align}
	& \Phi(\lambda) = c_1 + c_2 \lambda + \int_{(0,\infty)} \left(1- \frac s {\lambda+s} \right) \frac {m(ds)} s; \\
	& \Phi^{\prime}(\lambda) =c_2 + \int_{(0,\infty)} \frac s {(\lambda+s)^2} \frac {m(ds)} s ; \label{2.51a}\\
	& \lambda |\Phi^{\prime}(\lambda)| \le c_2 \lambda+
	\int_{(0,\infty)} \frac {\lambda} {\lambda+s} \frac {s} {\lambda+s}
	\frac {m(ds)}s \le \Phi(\lambda).
\end{align}
Similar estimates hold for higher derivatives. Thus \eqref{2.25a} holds.

3).  We check the uni-valence of $\Phi$ at $\Phi(1)$.  First we show $\Phi(1) \ne 0$. By \eqref{2.42a}, we have
\begin{align}
	\Phi(1) = c_1 +c_2+ \int_{(0,\infty)} \frac 1 {1+s} \frac {m(ds)}s.
\end{align}
If $\Phi(1) =0$, then $c_1=c_2=0$, and $\int_{(0,\infty)} \frac 1 {1+s} \frac {m(ds)}s=0$. It follows
that $\Phi \equiv 0$ which contradicts to the assumption that $a(t) \sim t^{\alpha}$ for $t\gg 1$.

Next we show $\Phi^{\prime}(1) \ne 0$.  Suppose $\Phi^{\prime}(1)=0$. By \eqref{2.51a}, we obtain
\begin{align}
	c_2=0, \qquad \int_{(0,\infty)} \frac 1 {(1+s)^2} m(ds) =0.
\end{align}
By using Lebesgue monotone convergence, we have
\begin{align*}
	\int_{(0,\infty)} \frac {1} {(1+s)} \frac {m(ds)} s = \lim_{\varepsilon \to 0+}
	\int_{(\varepsilon, \infty)} \frac 1 {1+s} \frac {m(ds)} s =0.
\end{align*}
This implies that {$\Phi \equiv c_1=\Phi(0)=0$}.  Thus we rule out this possibility and conclude $\Phi^{\prime}(1) \ne 0$.

Finally we observe that $\Phi^{\prime}(1) \ne 0$ and $\Phi \in C^{\infty} ( (0, \infty) )$.
Clearly $\Phi $ is strictly monotone near $\lambda=1$.
By monotonicity we have for $\delta_1>0$ sufficiently small,
\begin{align}
	\max_{0\le \lambda \le 1-\delta_1} \Phi(\lambda) \le \Phi( 1- \delta_1) <\Phi(1) ,
	\qquad \Phi(1)<\Phi(1+\delta_1) \le \Phi (\lambda), \;\forall\, \lambda\ge 1+\delta_1.
\end{align}
Thus $\Phi$ satisfies our condition (c).

%
%

%

\end{document}